\documentclass[10pt]{article}

\usepackage{graphicx}%
\usepackage{amsmath,amssymb,amsthm,upref,bm}%

\textheight=214mm \textwidth=140mm \voffset=-10mm \hoffset=-10mm

\newtheorem{corollary}{Corollary}%
\newtheorem{theorem}{Theorem}%
\newtheorem{proposition}{Proposition}%
%
%


\begin{document}

\baselineskip=4.4mm

\makeatletter

\newcommand{\E}{\mathrm{e}\kern0.2pt} 
\newcommand{\D}{\mathrm{d}\kern0.2pt}
\newcommand{\RR}{\mathbb{R}}
\newcommand{\CC}{\mathbb{C}}%
\newcommand{\ii}{\kern0.05em\mathrm{i}\kern0.05em}

\renewcommand{\Re}{\mathrm{Re}} 
\renewcommand{\Im}{\mathrm{Im}}

\def\bottomfraction{0.9}

\title{\bf Harmonicity of a function via \\ harmonicity of its spherical means}

\author{Nikolay Kuznetsov}

\date{}

\maketitle

\vspace{-8mm}

\begin{center}
Laboratory for Mathematical Modelling of Wave Phenomena, \\ Institute for Problems
in Mechanical Engineering, Russian Academy of Sciences, \\ V.O., Bol'shoy pr. 61,
St Petersburg 199178, Russian Federation \\ E-mail address:
nikolay.g.kuznetsov@gmail.com
\end{center}

\begin{abstract}
\noindent It is proved that harmonic functions are characterized by harmonicity of
their spherical means, for which purpose the iterated spherical means are used. The
similar characterization of solutions to the modified Helmholtz equation
(panharmonic functions) is given. Another description of harmonic functions is the
pointwise equality of a function and its iterated mean over an admissible pair of
spheres.
\end{abstract}

\setcounter{equation}{0}


\section{Introduction and the main result}

A function $u \in C^2 (D)$ is called harmonic (see \cite{ABR}, p.~25, for the origin
of this term), if it satisfies the equation $\nabla^2 u (x) = 0$ in a domain $D
\subset \RR^m$, $m \geq 2$; $\nabla = (\partial_1, \dots , \partial_m)$ denotes the
gradient operator, $\partial_i = \partial / \partial x_i$ and $x = (x_1, \dots,
x_m)$ is a point of $\RR^m$. Studies of mean value properties of harmonic functions
date back to the Gauss theorem of the arithmetic mean over a sphere; see \cite{G},
Article~20. Nowadays, its standard formulation is as follows.

\begin{theorem}
Let $u \in C^2 (D)$ be harmonic in a domain $D \subset \RR^m$, $m \geq 2$. Then for
every $x \in D$ the equality $M (x, r, u) = u (x)$ holds for each admissible sphere
$S_r (x)$.
\end{theorem}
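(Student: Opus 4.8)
The plan is to show that the spherical mean $M(x, r, u)$, viewed as a function of the radius $r$ alone, is constant throughout the range of admissible radii, and then to recover its value by letting $r \to 0$. To this end I would first rewrite the mean in the normalized form
\[
M(x, r, u) = \frac{1}{\omega_m} \int_{|\xi| = 1} u(x + r\xi) \, \D \sigma (\xi) ,
\]
where $\omega_m$ is the area of the unit sphere in $\RR^m$ and the integration runs over the fixed unit sphere. This form is convenient because the domain of integration no longer depends on $r$, so that differentiation under the integral sign is straightforward for $u \in C^2 (D)$.

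Next I would differentiate with respect to $r$. Since $u$ is continuously differentiable on the compact closed ball bounded by the admissible sphere (this is exactly what admissibility secures), differentiation under the integral sign is legitimate and yields
\[
\frac{\partial}{\partial r} M(x, r, u) = \frac{1}{\omega_m} \int_{|\xi| = 1} \nabla u (x + r\xi) \cdot \xi \, \D \sigma (\xi) .
\]
Because $\xi$ is precisely the outward unit normal at the point $x + r\xi$ of the sphere $S_r (x)$, the integrand is the normal derivative of $u$, and returning to $S_r (x)$ itself this reads
\[
\frac{\partial}{\partial r} M(x, r, u) = \frac{1}{\omega_m r^{m-1}} \int_{S_r (x)} \frac{\partial u}{\partial n} \, \D \sigma .
\]

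The decisive step is then the divergence theorem: the integral of the normal derivative over $S_r (x)$ equals the integral of $\nabla^2 u$ over the enclosed ball, which vanishes identically because $u$ is harmonic. Hence $\partial M / \partial r = 0$, so $M(x, r, u)$ is independent of $r$ as long as the sphere remains admissible. Finally, letting $r \to 0^+$ and using the continuity of $u$ at $x$, the mean tends to $u(x)$, which gives $M(x, r, u) = u (x)$ for every admissible sphere $S_r (x)$. I expect the only points demanding genuine care to be the justification of differentiation under the integral sign, which is secured by the $C^2$ hypothesis together with the compactness of the closed ball inside $D$, and the clean identification of the radial derivative with the normal derivative on $S_r (x)$ so that the divergence theorem applies; the harmonicity of $u$ itself enters only at the last analytic step, through the vanishing of the volume integral of $\nabla^2 u$.
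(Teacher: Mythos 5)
Your proof is correct, and it is the standard argument for the Gauss mean value theorem: pull the mean back to the fixed unit sphere, differentiate under the integral sign in $r$ (legitimate since $u \in C^2(D)$ and the closed ball $\overline{B_r(x)}$ is a compact subset of $D$ by admissibility), recognize the resulting integrand as the normal derivative on $S_r(x)$, convert the flux integral to a volume integral of $\nabla^2 u$ by the divergence theorem, and conclude that $M(x,\cdot,u)$ is constant in $r$, with value $u(x)$ obtained in the limit $r \to 0^+$ by continuity. The paper itself gives no proof of this statement: it is quoted as the classical theorem of Gauss (reference [2], Article~20) and used as a black box, so there is no internal argument to compare against. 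The only point worth tightening in your write-up is the bookkeeping of the Jacobian when you return from the unit sphere to $S_r(x)$, namely $\D S_y = r^{m-1}\,\D\sigma(\xi)$, which is exactly what produces the normalizing factor $1/(\omega_m r^{m-1}) = 1/|S_r|$ in your expression for $\partial M/\partial r$; with that made explicit, the argument is complete.
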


Here and below the following notation and terminology are used. The open ball of
radius $r$ centred at $x$ is denoted by $B_r (x) = \{ y : |y-x| < r \}$; the latter
is called admissible with respect to a domain $D$ provided $\overline{B_r (x)}
\subset D$, and $S_r (x) = \partial B_r (x)$ is the corresponding admissible sphere.
If $u \in C^0 (D)$, then its spherical mean value over $S_r (x) \subset D$ is\\[-2mm]
\begin{equation}
M (x, r, u) = \frac{1}{|S_r|} \int_{S_r (x)} u (y) \, \D S_y = \frac{1}{\omega_m}
\int_{S_1 (0)} u (x +r y) \, \D S_y \, , \label{sm}
\end{equation}
where $|S_r| = \omega_m r^{m-1}$ and $\omega_m = 2 \, \pi^{m/2} / \Gamma (m/2)$ is
the total area of the unit sphere (as usual $\Gamma$ stands for the Gamma function),
and $\D S$ is the surface area measure.

An immediate consequence of Theorem 1 involves the domain $D_r \subset D$ with
boundary `parallel' to $\partial D$ at the distance $r > 0$; namely, $D_r = \{ x \in
D : \overline{B_r (x)} \subset D \}$. Thus, $D_r$ is nonempty only when $r$ is less
than the radius of the open ball inscribed into~$D$.

\begin{corollary}
Let $u \in C^2 (D)$ be harmonic in a domain $D \subset \RR^m$, $m \geq 2$. If $D_r$
is nonempty for $r > 0$, then the function $M (\cdot, r, u)$ is harmonic in this
domain.
\end{corollary}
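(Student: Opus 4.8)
The plan is to deduce the corollary directly from Theorem~1, since the definition of $D_r$ is tailored precisely so that every sphere $S_r (x)$ with $x \in D_r$ is admissible. First I would observe that for any $x \in D_r$ the inclusion $\overline{B_r (x)} \subset D$ holds by the very definition of $D_r$, which is exactly the admissibility requirement for $S_r (x)$. Theorem~1 then applies at each such $x$ and gives $M (x, r, u) = u (x)$. Hence the two functions $M (\cdot, r, u)$ and $u$ coincide throughout $D_r$.

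Next I would invoke the hypothesis that $u$ is harmonic in $D$. Since $D_r \subset D$ and harmonicity is a local, pointwise property, $u$ satisfies $\nabla^2 u = 0$ at every point of $D_r$; because $M (\cdot, r, u)$ agrees with $u$ on all of $D_r$, it inherits this property and is therefore harmonic in $D_r$, which is the assertion to be proved. Before concluding I would note that $D_r$ is open: a short compactness argument shows that $\overline{B_r (x)} \subset D$ forces $\overline{B_r (x')} \subset D$ for all $x'$ sufficiently close to $x$, so that the equation is satisfied on an open set where the notion of harmonicity is meaningful; connectedness plays no role, the Laplace equation being local.

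In this argument there is essentially no obstacle, the entire content being already packaged in Theorem~1, so the corollary is simply its restatement on the parallel subdomain $D_r$. As an independent check I would verify the same conclusion by differentiating under the integral sign in the representation $M (x, r, u) = \omega_m^{-1} \int_{S_1 (0)} u (x + r y) \, \D S_y$; the regularity $u \in C^2 (D)$ together with the compactness of $S_1 (0)$ justifies interchanging $\nabla_x^2$ with the integral, after which $\nabla_x^2 u (x + r y) = (\nabla^2 u) (x + r y) = 0$ yields $\nabla_x^2 M (x, r, u) = 0$ at once. This second route avoids appealing to Theorem~1 and confirms that the regularity assumptions are not lost in passing to the spherical mean.
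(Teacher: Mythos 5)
Your argument is correct, and your primary route is genuinely different from the paper's. The paper disposes of this corollary in one line: apply $\nabla^2$ to the right-hand side of \eqref{sm}, i.e.\ differentiate under the integral sign and use $\nabla_x^2\, u(x+ry) = (\nabla^2 u)(x+ry) = 0$ --- this is exactly your ``independent check'' at the end, and your justification of the interchange (local compactness of $\overline{B_r(x)}$ in $D$ plus $u \in C^2$) is what the paper leaves implicit. Your main route instead feeds Theorem~1 back in: for $x \in D_r$ the sphere $S_r(x)$ is admissible by definition of $D_r$, so $M(x,r,u) = u(x)$, hence $M(\cdot,r,u)$ \emph{coincides} with $u$ on the open set $D_r$ and is trivially harmonic there. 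That is a stronger conclusion than the corollary asks for (identity rather than mere harmonicity of the mean), and it is perfectly valid; your remark that $D_r$ is open, so that harmonicity makes sense there, is a detail worth having. The trade-off is that the identification route leans entirely on the mean value property and so does not transfer as directly: for the panharmonic analogue (Corollary~2) one has $M(x,r,u) = a(\mu r)\,u(x)$ rather than equality, whereas the differentiation-under-the-integral argument shows in one stroke that the spherical mean satisfies any constant-coefficient equation that $u$ does, which is why the paper adopts it as the ``official'' proof and reuses it later. Keeping both arguments, as you do, is the right instinct.
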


The proof apparently follows by applying $\nabla$ twice to the right-hand side in
\eqref{sm}. In view of the latter assertion, it is natural to investigate whether
$u$ is harmonic in $D$ provided each $M (\cdot, r, u)$ with sufficiently small $r$
is harmonic in $D_r$. The following positive answer to this question is the main
result of this note.

\begin{theorem}
Let $D$ be a bounded domain in $\RR^m$, $m \geq 2$, and let $u \in C^0 (\overline D)
\cap C^2 (D)$ be real-valued. If $M (\cdot, r, u)$ is harmonic in $D_r$ for all $r
\in (0, r_*)$, where $r_*$ is a positive number such that $D_{r_*} \neq \emptyset$,
then $u$ is harmonic in $D$.
\end{theorem}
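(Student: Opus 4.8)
The plan is to bypass the Euler--Poisson--Darboux equation and instead exploit the \emph{commutativity of iterated spherical means} together with the classical mean value property (Theorem~1). The target is to recover the mean value property $M(x, \rho, u) = u(x)$ for all sufficiently small admissible spheres; once this is in hand, harmonicity of $u$ follows either from the standard converse of the mean value theorem or, since $u \in C^2(D)$, from the expansion $M(x, \rho, u) = u(x) + \frac{\rho^2}{2m}\, \nabla^2 u(x) + o(\rho^2)$, which forces $\nabla^2 u(x) = 0$.

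First I would record the commutativity identity. Writing both iterated means as double integrals over $S_1(0) \times S_1(0)$,
\begin{equation*}
M\big(x, \rho, M(\cdot, r, u)\big) = \frac{1}{\omega_m^2} \int_{S_1(0)} \int_{S_1(0)} u(x + \rho y + r z) \, \D S_z \, \D S_y = M\big(x, r, M(\cdot, \rho, u)\big),
\end{equation*}
where the outer equality is Fubini together with the symmetry $y \leftrightarrow z$ of the product measure; this is legitimate whenever $\overline{B_{r + \rho}(x)} \subset \overline{D}$, which is guaranteed for small $r, \rho$ by $u \in C^0(\overline{D})$.

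Next, fix a small $r \in (0, r_*)$. By hypothesis $M(\cdot, r, u)$ is harmonic in $D_r$, so Theorem~1 applied to this function on the domain $D_r$ gives $M(x, \rho, M(\cdot, r, u)) = M(x, r, u)$ for every sphere $S_\rho(x)$ admissible with respect to $D_r$, that is for $x \in D_{r + \rho}$. Setting $w := M(\cdot, \rho, u) - u$ and combining this equality with the commutativity identity, I obtain
\begin{equation*}
M(x, r, w) = M\big(x, r, M(\cdot, \rho, u)\big) - M(x, r, u) = 0 \qquad \text{for } x \in D_{r + \rho}.
\end{equation*}
Holding $\rho$ fixed and letting $r \to 0^+$, the continuity of $w$ (inherited from $u \in C^0(\overline{D})$) gives $M(x, r, w) \to w(x)$, whence $w(x) = 0$; that is, $M(x, \rho, u) = u(x)$. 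Since $x$ and the small $\rho$ were arbitrary subject to admissibility, the mean value property holds throughout $D$, and $u$ is harmonic.

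The bulk of the work is bookkeeping rather than analysis: I expect the main obstacle to be tracking admissibility, namely choosing $r, \rho$ small enough that $x$ lies in $D_{r + \rho}$ and that $S_\rho(x)$ is admissible for the harmonic function $M(\cdot, r, u)$ on $D_r$, so that Theorem~1 may legitimately be invoked; fixing an interior point and requiring $r + \rho < \min\{r_*, \mathrm{dist}(x, \partial D)\}$ resolves this. The only genuinely analytic points are the interchange of the order of integration in the commutativity identity (Fubini, valid since $u$ is bounded and continuous on $\overline{D}$) and the passage to the limit $r \to 0^+$, both of which rest only on the assumed continuity of $u$. As an alternative one could argue directly through the identity $\nabla_x^2 M(x, r, u) = M(x, r, \nabla^2 u)$ together with the Darboux relation $r^{m-1} \partial_r M(x, r, u) = \omega_m^{-1} \int_{B_r(x)} \nabla^2 u \, \D y$, concluding $\nabla^2 u(x) = 0$ by letting $r \to 0$; I prefer the iterated-mean route here because it keeps the regularity demands minimal and uses only the already-established Theorem~1.
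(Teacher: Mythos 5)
Your argument is correct, and its engine coincides with the paper's: both apply Theorem~1 to the harmonic function $M(\cdot,r,u)$ on $D_r$, recognize the resulting expression as the symmetric iterated mean \eqref{im'}, and let the radius $r$ from the hypothesis tend to $0$ to recover a mean value identity for $u$ itself. The genuine difference is the endgame. The paper records the conclusion as ``$M(x,r(x),u)=u(x)$ for \emph{some} admissible $r(x)$ at each $x$'' and then must invoke Kellogg's converse (Theorem~3), which the author himself flags as ``not trivial at all.'' You instead observe that the same computation delivers $M(x,\rho,u)=u(x)$ for \emph{every} $\rho$ with $\overline{B_\rho(x)}\subset D$ (only the inner radius $r$ is constrained to $(0,r_*)$, and it is sent to $0$), so the heavy one-radius converse is unnecessary: the elementary converse of the mean value property suffices, or, even more directly, the expansion $M(x,\rho,u)=u(x)+\tfrac{\rho^2}{2m}\nabla^2 u(x)+o(\rho^2)$ available from the hypothesis $u\in C^2(D)$ forces $\nabla^2 u(x)=0$. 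This is a real simplification of the paper's logical structure, since the paper's own limit argument already yields the full range of radii and simply does not exploit it. Two small points of bookkeeping: the containment $\overline{B_{r+\rho}(x)}\subset D$ is secured by the choice $r+\rho<\mathrm{dist}(x,\partial D)$ (which you state later), not by the continuity of $u$ on $\overline D$; and $D_r$ need not be connected, so Theorem~1 should be read componentwise --- a harmless remark that applies equally to the paper's proof.
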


The author failed to find a result of this kind in the literature; in particular,
there is no mention of anything similar in the extensive survey \cite{NV}.

\vspace{-2mm}

\section{Proof of Theorem 2 and discussion}

Prior to proving Theorem 2, let us consider some properties of the iterated
spherical mean introduced by John; see \cite{J}, p.~78, but the notation used here
is different:\\[-2mm]
\begin{equation}
I (x, r', r, u) = M (x, r', M (\cdot, r, u)) = \frac{1}{\omega_m} \int_{S_1 (0)} M
(x + r' y, r, u) \, \D S_y  \, . \label{im}
\end{equation}
The second equality is a consequence of \eqref{sm}. Since $M (\cdot, r, u)$ is
defined on $D_r$, it is obvious that $I (\cdot, r', r, u)$ is defined on $D_{r'+r}$.
Substituting the expression for $M$, we obtain\\[-2mm]
\begin{equation}
I (x, r', r, u) = \frac{1}{\omega_m^2} \int_{S_1 (0)} \int_{S_1 (0)} u (x + r' y + r
z) \, \D S_z \, \D S_y \, , \label{im'}
\end{equation}
and so it is symmetric in $r'$ and $r$, that is, $I (x, r', r, u) = I (x, r, r',
u)$. Moreover,\\[-2mm]
\[ I (x, 0, r, u) = I (x, r, 0, u) = M (x, r, u) \quad \mbox{and} \quad I (x, 0, 0, 
u) = u (x) .
\]
By virtue of the iterated mean \eqref{im'}, Theorem 2 will be reduced to the
converse of Theorem~1 due to Kellogg \cite{K1}; its modern formulation is as
follows.

\begin{theorem}[Kellogg]
Let $D$ be a bounded domain in $\RR^m$, $m \geq 2$, and let $u \in C^0 (\overline
D)$ be real-valued. If for each $x \in D$ there exists $r (x)$ such that
$\overline{B_{r (x)} (x)} \subset D$ and $M (x, r (x), u) = u (x)$, then $u$ is
harmonic in~$D$.
\end{theorem}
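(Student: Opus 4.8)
The plan is to combine a maximum principle for functions enjoying the restricted mean value property with a comparison against a function that is harmonic throughout~$D$. The decisive difficulty is that the radius $r(x)$ is completely uncontrolled, so the mean-value sphere $S_{r(x)}(x)$ need not lie inside any prescribed subball; consequently a comparison function that is merely \emph{locally} harmonic on a small ball is useless, because the identity $M(x,r(x),\cdot)=(\cdot)(x)$ cannot be transferred to spheres that escape that ball. If instead $h$ is harmonic on \emph{all} of $D$, then $\overline{B_{r(x)}(x)}\subset D$ together with Theorem~1 forces $M(x,r(x),h)=h(x)$, so the difference $w=u-h$ inherits the one-radius identity $M(x,r(x),w)=w(x)$ on the whole of $D$. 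This is the observation that makes everything run, and it is why the argument must be global rather than localized.

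First I would establish a propagation lemma: if $v\in C^0(D)$ is bounded above, satisfies $M(x,r(x),v)\ge v(x)$ for some $r(x)>0$ with $\overline{B_{r(x)}(x)}\subset D$ at each $x$, and obeys $\limsup_{x\to\xi}v(x)\le 0$ at every $\xi\in\partial D$, then $v\le 0$ in $D$. Indeed, were $\sup_D v=m>0$, the boundary condition would keep any maximizing sequence inside a compact subset of $D$, so by continuity $m$ is attained at an interior point; the level set $F=\{v=m\}$ is then compact in $D$. On $F$ the inequality $M(x,r(x),v)\ge m$ combined with $v\le m$ forces $v\equiv m$ on each sphere $S_{r(x)}(x)$, whence $S_{r(x)}(x)\subset F$. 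Choosing $x_0\in F$ nearest to $\partial D$ and moving from $x_0$ along its sphere towards the closest boundary point produces a point of $F$ strictly nearer to $\partial D$, a contradiction.

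Next I would take $h$ to be the Perron--Wiener--Brelot solution of the Dirichlet problem in $D$ with the continuous data $u|_{\partial D}$; it is harmonic and bounded in $D$. By the observation above, $w=u-h$ satisfies $M(x,r(x),w)=w(x)$ throughout $D$, and $w(x)\to 0$ as $x$ approaches any regular boundary point. Applying the lemma to $w$, and to $-w$ which enjoys the same identity, would immediately give $w\equiv 0$, that is $u=h$ harmonic, were the boundary values attained everywhere.

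The main obstacle is precisely the irregular boundary points, which for a general bounded $D$ need not be absent, so that $\limsup_{x\to\xi}w$ is controlled only off a polar exceptional set. I would remove them by the standard penalization: since that set is polar, there is a nonnegative \emph{continuous} superharmonic function $s$ on $D$, finite inside $D$ but tending to $+\infty$ at each irregular point. Then $M(x,r(x),-s)\ge -s(x)$, so for every $\varepsilon>0$ the function $w-\varepsilon s$ meets the hypotheses of the lemma with $\limsup\le 0$ at \emph{all} of $\partial D$; the lemma yields $w\le\varepsilon s$, and letting $\varepsilon\to 0$ gives $w\le 0$. The symmetric argument applied to $-w$ gives $w\ge 0$, whence $w\equiv 0$ and $u$ is harmonic. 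The only non-elementary inputs are the PWB solvability of the Dirichlet problem and the description of polar sets by such superharmonic barriers; the propagation lemma is the elementary core on which the rest rests.
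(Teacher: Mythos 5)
The paper does not prove this statement at all: it is Kellogg's theorem, imported verbatim from \cite{K1} and used as a black box (the author even remarks that it ``is not trivial at all''). So there is nothing in the paper to match your argument against; what you have written is, in essence, the classical proof of Kellogg's converse, and it is correct modulo the two standard potential-theoretic inputs you explicitly flag. Your propagation lemma is sound: the boundary condition $\limsup_{x\to\xi}v\le 0$ together with boundedness of $D$ confines the level set $F=\{v=\sup v\}$ to a compact subset of $D$, the one-radius super-mean inequality propagates $F$ along whole spheres, and the ``point of $F$ nearest to $\partial D$'' argument gives the contradiction. The comparison step correctly exploits that $h$ is harmonic on \emph{all} of $D$, so Theorem~1 applies on every sphere $\overline{B_{r(x)}(x)}\subset D$ and $w=u-h$ inherits the exact one-radius identity. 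The only non-elementary ingredients are the PWB solution, the Kellogg--Evans fact that the irregular boundary points form a polar set, and the existence of a nonnegative superharmonic $s$, harmonic (hence continuous and finite) inside $D$ because the associated measure sits on $\partial D$, blowing up at the irregular points; your $\varepsilon s$ penalization then closes the argument. It is worth noting that the paper's own proof of its Theorem~4 follows precisely the skeleton of your steps one and two --- the same nearest-point maximum principle and the same comparison with the solution of the Dirichlet problem --- but avoids your third step entirely by \emph{assuming} $D$ is a Dirichlet domain, so that $u-u_0$ vanishes continuously on all of $\partial D$. Your extra step is exactly what upgrades that easy argument to an arbitrary bounded domain, which is the genuine content of Kellogg's theorem.
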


\begin{proof}[Proof of Theorem 2]
It is clear that each $x \in D$ belongs to all $D_r$ with $r < \mathrm{dist} (x,
\partial D) / 2$, where $\mathrm{dist} (x, \partial D)$ is the distance from $x$ to
$\partial D$. Let us fix some $r (x) \in (0, \mathrm{dist} (x, \partial D) / 2)$,
and so $\overline{B_{r (x)} (x)} \subset D_r$ for all described values of~$r$. Since
the mean $M (\cdot, r, u)$ is harmonic in $D_r$ for each of these values, we have
$M (x, r (x), M (\cdot, r, u)) = M (x, r, u)$ by Theorem~1. In view of \eqref{im'}
and \eqref{sm}, this can be written as follows:\\[-2mm]
\[ \frac{1}{\omega_m^2} \int_{S_1 (0)} \int_{S_1 (0)} u (x + r (x) y + r
z) \, \D S_z \, \D S_y = \frac{1}{\omega_m} \int_{S_1 (0)} u (x +r y) \, \D S_y \,.
\]
Letting $r \to 0$ in this equality, we obtain that $M (x, r (x), u) = u (x)$ holds
for each $x \in D$ with some $r (x)$ such that $\overline{B_{r (x)} (x)} \subset D$.
Now, Theorem~3 yields that $u$ is harmonic in~$D$.
\end{proof}

The proof looks simple, but it relies upon Theorem~3 which is not trivial at all.
However, the strong converse of Theorem~1 is easy to prove when $D$ is the so-called
Dirichlet domain; that is, a bounded domain in which the Dirichlet problem for the
Laplace equation is soluble provided the function given on $\partial D$ is
continuous. To illustrate the advantage of Dirichlet do\-mains, let us prove the
assertion (Theorem~4 below) similar to Theorem~3, but involving the iterated
mean~\eqref{im} instead of $M (\cdot, r, u)$. Theorem~4 as well as in the next
proposition require admissible triples instead of admissible spheres, thus allowing
us to consider $I (x, r', r, u)$ for any $x \in D$. Namely, the triple $(x, r', r)$
is admissible with respect to~$D$, if $x + r' y + r z$ belongs to this domain for
all $y, z \in B_1 (0)$.

\begin{proposition}
Let $u \in C^2 (D)$ be harmonic in a domain $D \subset \RR^m$, $m \geq 2$. Then for
every $x \in D$ the equality $I (x, r', r, u) = u (x)$ holds for each admissible
triple $(x, r', r)$.
\end{proposition}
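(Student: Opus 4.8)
The plan is to prove the claim by a double application of the Gauss mean value property (Theorem~1). I recall that the iterated mean is defined in \eqref{im} as $I(x,r',r,u)=M(x,r',M(\cdot,r,u))$, so the natural strategy is to evaluate the outer and inner means one at a time, using harmonicity of $u$ at each stage.

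First I would observe that since the triple $(x,r',r)$ is admissible with respect to $D$, the point $x+r'y+rz$ lies in $D$ for all $y,z\in \overline{B_1(0)}$, so every sphere entering the computation is admissible and all the integrals are well defined. Fixing such a triple, I would work from the inside out. For each fixed $y\in S_1(0)$, the point $x+r'y$ satisfies $\overline{B_r(x+r'y)}\subset D$ by admissibility, so the inner sphere $S_r(x+r'y)$ is admissible; since $u$ is harmonic in $D$, Theorem~1 gives $M(x+r'y,r,u)=u(x+r'y)$. Substituting this into the representation
\[
I(x,r',r,u)=\frac{1}{\omega_m}\int_{S_1(0)} M(x+r'y,r,u)\,\D S_y
\]
collapses the inner mean and yields $I(x,r',r,u)=\frac{1}{\omega_m}\int_{S_1(0)} u(x+r'y)\,\D S_y = M(x,r',u)$.

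It remains to apply Theorem~1 a second time to the outer sphere. Admissibility of the triple guarantees (take $z=0$) that $\overline{B_{r'}(x)}\subset D$, so $S_{r'}(x)$ is itself an admissible sphere, and harmonicity of $u$ gives $M(x,r',u)=u(x)$. Chaining the two equalities produces $I(x,r',r,u)=u(x)$, as required. I would note that the symmetry $I(x,r',r,u)=I(x,r,r',u)$ recorded after \eqref{im'} makes the roles of $r'$ and $r$ interchangeable, so the order in which the two means are collapsed is immaterial.

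I do not anticipate a genuine obstacle here, as the argument is just Theorem~1 applied twice; the only point requiring care is the bookkeeping of admissibility. Specifically, I must confirm that the definition of an admissible triple indeed forces both $S_r(x+r'y)$ (for every $y\in S_1(0)$) and $S_{r'}(x)$ to be admissible spheres, so that Theorem~1 legitimately applies at each step. This is immediate from the stated condition that $x+r'y+rz\in D$ for all $y,z\in B_1(0)$, but I would state it explicitly to keep the reduction to Theorem~1 clean.
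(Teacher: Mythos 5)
Your argument is correct and coincides with the paper's own proof: both collapse the inner mean via Theorem~1 (using $M(x+r'y,r,u)=u(x+r'y)$ for each $y$), substitute into \eqref{im} to obtain $M(x,r',u)$, and then apply Theorem~1 once more. The extra admissibility bookkeeping you include is harmless and does not change the route.
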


\begin{proof}
Since $u$ is harmonic in $D$, the equality $M (x + r' y, r, u) = u (x + r' y)$ holds
for every $x \in D$ and all $y \in B_1 (0)$ provided the triple $(x, r', r)$ is
admissible. Then the result follows by using this equality in the integral on the
right-hand side of \eqref{im} with subsequent application of Theorem~1 to the
obtained integral.
\end{proof}

Now, let us prove the following strong converse of Proposition 1.

\begin{theorem}
Let $D$ be a Dirichlet domain in $\RR^m$, $m \geq 2$, and let $u \in C^0 (\overline
D) \cap C^2 (D)$ be real-valued. If for every $x \in D$ there exist $r' (x)$ and $r
(x)$ such that the triple $(x, r' (x), r (x))$ is admissible and the equality $I (x,
r' (x), r (x), u) = u (x)$ holds, then $u$ is harmonic in $D$.
\end{theorem}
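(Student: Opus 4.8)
The plan is to use the defining property of a Dirichlet domain to produce a harmonic comparison function, reduce the assertion to a function that vanishes on $\partial D$ and still obeys the pointwise iterated mean value property, and then kill that function off by a maximum principle argument. This way Theorem~3 is never invoked. First I would solve the Dirichlet problem in $D$ with the continuous boundary data $u|_{\partial D}$; since $D$ is a Dirichlet domain this yields an $h$, harmonic in $D$ and belonging to $C^0 (\overline D)$, with $h = u$ on $\partial D$. Set $v = u - h$, so that $v \in C^0 (\overline D) \cap C^2 (D)$ and $v = 0$ on $\partial D$. The iterated mean is linear in its last argument by \eqref{im'}, and Proposition~1 applied to the harmonic $h$ gives $I (x, r' (x), r (x), h) = h (x)$ for the very triples prescribed in the hypothesis; subtracting, the hypothesis transfers verbatim to $v$, namely $I (x, r' (x), r (x), v) = v (x)$ for every $x \in D$. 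It therefore suffices to prove $v \equiv 0$, for then $u = h$ is harmonic.

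Next I would run a maximum principle argument on $v$. Suppose $v \not\equiv 0$; replacing $v$ by $-v$ if necessary, assume $M := \max_{\overline D} v > 0$. Since $v = 0$ on $\partial D$, the set $F = \{ x \in \overline D : v (x) = M \}$ is a nonempty compact subset of the open set $D$. The crucial local observation is that the mean value equality propagates outward from a maximizing point. Fix $x_0 \in F$ and write $(r', r) = (r' (x_0), r (x_0))$, which I take to satisfy $r' + r > 0$. Expanding $I (x_0, r', r, v) = M$ via \eqref{im}, the outer average $\omega_m^{-1} \int_{S_1 (0)} M (x_0 + r' y, r, v) \, \D S_y$ equals $M$ while every integrand is $\le M$; continuity forces $M (x_0 + r' y, r, v) = M$ for all $y \in S_1 (0)$, and a second application of the same reasoning forces $v \equiv M$ on each sphere $S_r (x_0 + r' y)$. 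Hence $v = M$ throughout the spherical shell $\{ w : |r' - r| \le |w - x_0| \le r' + r \}$; in particular $v (x_0 + (r' + r) e) = M$ for every unit vector $e$.

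To close the argument I would fix a unit vector $e$ and choose $x_0 \in F$ maximizing the linear functional $x \mapsto e \cdot x$ over the compact set $F$. The point $x_1 = x_0 + (r' (x_0) + r (x_0))\, e$ then lies in $F$: it belongs to $\overline D$ because the triple $(x_0, r' (x_0), r (x_0))$ is admissible, and since $v (x_1) = M > 0$ it cannot lie on $\partial D$, whence $x_1 \in D$. But $e \cdot x_1 = e \cdot x_0 + (r' (x_0) + r (x_0)) > e \cdot x_0$, contradicting the maximality of $e \cdot x$ on $F$. Therefore $\max_{\overline D} v \le 0$; applying the same reasoning to $-v$ gives $\min_{\overline D} v \ge 0$, so $v \equiv 0$, as required.

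The role of the Dirichlet hypothesis is precisely to furnish the comparison function $h$, which is what lets one bypass the deep Theorem~3. I expect the main obstacle to be the equality case in the mean value inequality, i.e.\ turning $I (x_0, r', r, v) = M$ into pointwise equality $v = M$ on a sphere lying strictly outside $x_0$; this requires $r' (x_0) + r (x_0) > 0$, so that the extremal point is genuinely displaced in the direction $e$. The assumption $r' (x) + r (x) > 0$ is the natural nondegeneracy reading of an admissible triple, without which the hypothesis $I (x, 0, 0, u) = u (x)$ would be empty of content.
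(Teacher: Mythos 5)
Your proposal is correct and follows essentially the same route as the paper's proof: the Dirichlet hypothesis furnishes a harmonic comparison function with the same boundary trace, and the conclusion is forced by a maximum principle obtained from the equality case of the iterated mean at an extremal point of the level set of the maximum. The only cosmetic differences are that you subtract the comparison function before running the maximum principle (the paper does it after, applying the principle to $u-u_0$ and $u_0-u$) and that you displace the extremal point by maximizing $e \cdot x$ over the level set rather than taking its point nearest to $\partial D$; your explicit remark that the argument needs $r'(x)+r(x)>0$ addresses a nondegeneracy the paper leaves implicit.
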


\begin{proof}
First, let us demonstrate that theorem's assumptions yield that\\[-3mm]
\begin{equation}
\max_{x \in \overline D} u (x) = \max_{x \in \partial D} u (x) \, . \label{max}
\end{equation}
Denoting the left-hand side by $U$, we show that the closed preimage $u^{-1} (U)$
has a nonempty intersection with $\partial D$. Indeed, if $u^{-1} (U) \cap \partial
D = \emptyset$, then there exists $x_0 \in u^{-1} (U) \subset D$ that is nearest to
$\partial D$, and so for some admissible triple $(x_0, r' (x_0), r (x_0))$ we
have:\\[-2mm]
\[ U = u (x_0) = \frac{1}{\omega_m^2} \int_{S_1 (0)} \int_{S_1 (0)} u (x_0 + r' (x_0) 
y + r (x_0) z) \, \D S_z \, \D S_y \, .
\]
In view of the maximality of $U$, the equality $u (x_0 + r' (x_0) y + r (x_0) z) =
U$ holds for all $y, z \in S_1 (0)$, that is, every $x_0 + r' (x_0) y + r (x_0) z$
belongs to $u^{-1} (U)$. Hence the distance to $\partial D$ from some point $x_0 +
r' (x_0) y_0 + r (x_0) z_0 \in D$ with $y_0, z_0 \in S_1 (0)$ is smaller than
from~$x_0$. The obtained contradiction yields \eqref{max}.

Let $f$ denote the trace of $u$ on $\partial D$; then there exists $u_0 \in C^0
(\overline D)$ solving the Dirichlet problem for Laplace equation in $D$ with $f$ as
the boundary data. Therefore, Theorem~1 is valid for $u_0$, and so theorem's
assumptions are fulfilled for $u - u_0$ and $u_0 - u$. Since both these functions
vanish on $\partial D$, equality \eqref{max} yields that $u - u_0 \leq 0$ and $u_0 -
u \leq 0$ in $D$. Thus, $u$ is harmonic in $D$, being equal to $u_0$ there.
\end{proof}

\vspace{-5mm}

\subsection{Extension of Theorem 2 to panharmonic functions}

It was Duffin \cite{D}, who introduced the convenient abbreviation `panharmonic
functions' for the awkward `solutions of the modified Helmholtz equation' which
arise in numerous applications; see \cite{CHL}. Since the most important of them
concerns nuclear forces, the equation\\[-2mm]
\begin{equation}
\nabla^2 u - \mu^2 u = 0 , \quad \mu \in \RR \setminus \{0\} , \label{MHh}
\end{equation} 
is referred to as the Yukawa equation in \cite{D} (surprisingly, without citing the
original paper \cite{Y}, in which Yukawa proposed his theory of these forces). Much
attention has been given~to solving \eqref{MHh} numerically (see \cite{CHL} again),
but no analogue of Theorem~1 for panharmonic functions was proved until recently.
The following assertion about the $m$-dimensional mean for spheres was obtained in
\cite{Ku2}.

\begin{theorem}
Let $u \in C^2 (D)$ be panharmonic in a domain $D \subset \RR^m$, $m \geq 2$. Then
for every $x \in D$ the equality\\[-4mm]
\begin{equation}
M (x, r, u) =  a (\mu r) \, u (x) , \quad a (\mu r) = \Gamma \left( \frac{m}{2}
\right) \frac{I_{(m-2)/2} (\mu r)}{(\mu r / 2)^{(m-2)/2}} \, ,
\label{MM}
\end{equation}
holds for each admissible sphere $S_r (x);$ $I_\nu$ denotes the modified Bessel
function of order $\nu$.
\end{theorem}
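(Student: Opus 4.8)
The plan is to reduce the assertion to a one-dimensional ordinary differential equation for $M(x, r, u)$ regarded as a function of $r$ with $x$ held fixed. First I would record the classical Darboux relation connecting the radial derivatives of the spherical mean with the mean of the Laplacian. Writing $m(r) = M(x, r, u)$ and differentiating the last integral in \eqref{sm} under the integral sign (legitimate because $u \in C^2 (D)$), the substitution back to $S_r (x)$ and the divergence theorem on $B_r (x)$ give $r^{m-1} m'(r) = \omega_m^{-1} \int_{B_r (x)} \nabla^2 u \, \D y$; differentiating once more and dividing by $r^{m-1}$ yields
\[
m''(r) + \frac{m-1}{r}\, m'(r) = M (x, r, \nabla^2 u) \, .
\]
Since $u$ is panharmonic, $\nabla^2 u = \mu^2 u$, so the right-hand side equals $\mu^2 m(r)$, and $m$ solves the linear equation $m'' + \frac{m-1}{r}\, m' - \mu^2 m = 0$ on the interval of admissible radii.

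The second step is to identify this equation with the modified Bessel equation. Setting $\nu = (m-2)/2$ and substituting $m(r) = (\mu r)^{-\nu} g (\mu r)$, a direct computation (using $m - 1 = 2\nu + 1$) transforms the equation into $t^2 g''(t) + t g'(t) - (t^2 + \nu^2)\, g(t) = 0$ with $t = \mu r$, whose linearly independent solutions are $I_\nu (t)$ and $K_\nu (t)$. Consequently,
\[
M (x, r, u) = (\mu r)^{-\nu} \bigl[\, c_1 I_\nu (\mu r) + c_2 K_\nu (\mu r) \,\bigr]
\]
with constants $c_1, c_2$ that may depend on $x$.

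The final step fixes these constants through the behaviour as $r \to 0$. Because $u$ is continuous at $x$, the mean $m(r)$ tends to the finite value $u(x)$; since $(\mu r)^{-\nu} K_\nu (\mu r)$ is unbounded near the origin (growing like $r^{-2\nu}$ for $\nu > 0$ and like $-\log r$ when $\nu = 0$, that is $m = 2$), regularity forces $c_2 = 0$. The asymptotics $I_\nu (t) \sim (t/2)^\nu / \Gamma (\nu + 1)$ then give $\lim_{r \to 0} (\mu r)^{-\nu} I_\nu (\mu r) = 1 / [\, 2^\nu \Gamma (\nu + 1)\,]$, whence $m(r) \to c_1 / [\, 2^\nu \Gamma (\nu + 1)\,] = u(x)$ determines $c_1 = 2^\nu \Gamma (\nu + 1)\, u(x)$. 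Recalling $\nu + 1 = m/2$ and $2^\nu (\mu r)^{-\nu} = (\mu r / 2)^{-\nu}$, this is exactly \eqref{MM}.

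I expect the main obstacle to be the careful justification of the analysis at $r = 0$: one must confirm that the continuity of $u$ genuinely forces the singular Bessel branch to drop out and that the surviving normalization reproduces $u(x)$ uniformly across all dimensions, including the logarithmic case $m = 2$. By contrast, the differentiations leading to the Darboux relation and the algebraic reduction to Bessel's equation are routine once $u \in C^2 (D)$ is assumed.
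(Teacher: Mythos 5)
Your proof is correct: the Darboux relation $m'' + \frac{m-1}{r}\,m' = M(x,r,\nabla^2 u)$, the reduction to the modified Bessel equation via $m(r) = (\mu r)^{-\nu} g(\mu r)$ with $\nu = (m-2)/2$, and the elimination of the $K_\nu$ branch by boundedness at $r=0$ all check out, and the normalization $c_1 = 2^\nu \Gamma(\nu+1)\,u(x)$ reproduces \eqref{MM} exactly. The paper itself gives no proof of this theorem (it is quoted from the cited reference \cite{Ku2}), but your argument is the standard one used there, so the approaches coincide in substance.
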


For $m=3$ formula \eqref{MM} has particularly simple form because $a (\mu r) = \sinh
\mu r / (\mu r)$, and this was proved by C.~Neumann \cite{NC} as early as 1896.
Duffin independently rediscovered his proof (see \cite{D}, pp.~111-112), but for the
two-dimensional case when $a (\mu r) = I_0 (\mu r)$.

\begin{corollary}
Let $u \in C^2 (D)$ be panharmonic in a domain $D \subset \RR^m$, $m \geq 2$. If for
$r > 0$ the domain $D_r$ is nonempty, then the function $M (\cdot, r, u)$ is
panharmonic in this domain; the coefficient in equation \eqref{MHh} for $M (\cdot,
r, u)$ and $u$ is the same.
\end{corollary}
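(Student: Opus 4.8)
The plan is to mimic the proof of Corollary 1, differentiating the right-hand side of \eqref{sm} under the integral sign, the only difference being that the Laplacian of $u$ is now $\mu^2 u$ rather than zero. First I would take the form
\[ M (x, r, u) = \frac{1}{\omega_m} \int_{S_1 (0)} u (x + r y) \, \D S_y \]
from \eqref{sm}, in which the dependence on the base point $x$ sits entirely in the argument of $u$. For $x$ ranging over $D_r$ the point $x + r y$ traces the admissible sphere $S_r (x) \subset D$, so the integrand and its $x$-derivatives up to second order are continuous; this legitimises differentiation under the integral sign on $D_r$.

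Applying the Laplacian $\nabla^2$ in $x$ under the integral sign, and using that translation of the argument by $r y$ commutes with $\nabla^2$, I obtain
\[ \nabla^2 M (x, r, u) = \frac{1}{\omega_m} \int_{S_1 (0)} (\nabla^2 u) (x + r y) \, \D S_y \, . \]
The key step is then to invoke panharmonicity of $u$: the relation $\nabla^2 u = \mu^2 u$ holds throughout $D$, hence at every point $x + r y$ with $x \in D_r$ and $y \in S_1 (0)$. Substituting $(\nabla^2 u) (x + r y) = \mu^2 u (x + r y)$ turns the right-hand side into $\mu^2$ times the very integral that defines $M (x, r, u)$.

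This yields $\nabla^2 M (x, r, u) = \mu^2 M (x, r, u)$ for all $x \in D_r$, which is precisely equation \eqref{MHh} for $M (\cdot, r, u)$ with the same coefficient $\mu$; thus $M (\cdot, r, u)$ is panharmonic in $D_r$ with that coefficient, as claimed. I do not anticipate a serious obstacle, since the argument rests only on the linearity of \eqref{MHh} and its constant coefficient: the spherical mean is a superposition (an integral) of translates of $u$, and both operations preserve any linear constant-coefficient equation. The one point deserving a word of care is the legitimacy of differentiation under the integral sign, which is secured by the $C^2$ regularity of $u$ together with the admissibility $\overline{B_r (x)} \subset D$ built into the definition of $D_r$.
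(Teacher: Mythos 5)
Your argument is correct: differentiating under the integral sign in \eqref{sm} is justified by $u \in C^2(D)$ and the inclusion $\overline{B_r(x)} \subset D$ for $x \in D_r$, and substituting $\nabla^2 u = \mu^2 u$ pointwise gives $\nabla^2 M(\cdot,r,u) = \mu^2 M(\cdot,r,u)$ on $D_r$. However, your route is not quite the one the paper indicates. The author states that, in addition to equation \eqref{MHh}, formula \eqref{MM} must be used; the intended argument is evidently to invoke Theorem~5, which gives $M(x,r,u) = a(\mu r)\, u(x)$ for every $x \in D_r$, so that $M(\cdot,r,u)$ is simply the constant multiple $a(\mu r)\, u$ of a panharmonic function and is therefore panharmonic with the same coefficient, with no differentiation under the integral sign required. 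Your version buys independence from the (nontrivial) mean value formula \eqref{MM}: it rests only on linearity, the constant coefficient, and the fact that translation and averaging commute with $\nabla^2$, and so it would apply verbatim to any equation of this type. The paper's version buys brevity: once \eqref{MM} is granted, the conclusion is immediate and one even gets the explicit identity $M(\cdot,r,u) = a(\mu r)\, u$ on $D_r$, which is stronger than mere panharmonicity. Both proofs are sound; yours is the more elementary and self-contained of the two.
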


The proof of this corollary is similar to that of Corollary~1, but along with
equation \eqref{MHh} formula \eqref{MM} must be used. Now, let us turn to the
version of strong converse of Theorem~5 valid for Dirichlet domains which are the
same for harmonic and panharmonic functions; this is a well-known consequence of
results obtained in \cite{O} and \cite{T}. In the paper \cite{Ku2}, the following
assertion was established.

\begin{theorem}
Let $D \subset \RR^m$ be a Dirichlet domain, and let $u \in C^0 (\overline D) \cap
C^2 (D)$ be real-valued. If for every $x \in D$ there exists $r (x)$ such that $S_{r
(x)} (x)$ is admissible and equality \eqref{MM} holds with $r = r (x)$ and fixed
$\mu > 0$, then $u$ is panharmonic in $D$ and the coefficient in
equation~\eqref{MHh} is $\mu^2$.
\end{theorem}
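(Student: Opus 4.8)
The plan is to mirror the proof of Theorem~4, since Theorem~7 is the panharmonic analogue of that earlier strong converse and $D$ is again a Dirichlet domain. The key structural fact I would exploit is that the Dirichlet domains for the Laplace and Yukawa equations coincide, so the boundary-value problem for \eqref{MHh} with continuous data on $\partial D$ is solvable; this is exactly the ingredient that made the comparison argument in Theorem~4 work.

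First I would establish a maximum-principle-type statement adapted to the weight $a(\mu r)$. The obstacle here is that the pointwise hypothesis now reads $M(x, r(x), u) = a(\mu r(x))\, u(x)$ rather than $u(x)$, and since $0 < a(\mu r) $ but more importantly $a(\mu r) \geq 1$ for $\mu$ real (as $I_\nu$ grows), the averaging identity no longer says that $u(x)$ equals an average of its neighbouring values. Concretely, I would argue that the maximum of $u$ over $\overline D$ is attained on $\partial D$: supposing otherwise, pick a maximiser $x_0 \in D$ nearest to $\partial D$, write $U = u(x_0) = a(\mu r(x_0))^{-1} M(x_0, r(x_0), u)$, and use $M(x_0, r(x_0), u) = a(\mu r(x_0)) u(x_0)$ together with $a(\mu r) \geq 1$ to force $u$ to equal $U$ on the whole sphere $S_{r(x_0)}(x_0)$, contradicting the minimality of the distance to $\partial D$ exactly as before. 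The sign of $a(\mu r) - 1$ is what makes this work, so I would verify that inequality carefully; it is the main delicate point.

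Next, having the boundary maximum principle, I would introduce the panharmonic solution $u_0 \in C^0(\overline D) \cap C^2(D)$ of \eqref{MHh} with the same boundary trace $f$ as $u$, which exists precisely because $D$ is a Dirichlet domain for the Yukawa equation. By Theorem~5, $u_0$ satisfies the mean-value identity \eqref{MM} with the same $\mu$, hence the hypotheses of Theorem~7 hold for $u_0$ as well. Then $w = u - u_0$ and $-w = u_0 - u$ both satisfy these hypotheses, both vanish on $\partial D$, and both therefore satisfy the maximum principle just proved, giving $w \leq 0$ and $-w \leq 0$ in $D$. Thus $u \equiv u_0$ in $D$, so $u$ is panharmonic with coefficient $\mu^2$.

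The one point requiring more care than in Theorem~4 is that the hypotheses are \emph{linear-homogeneous} in $u$: both $M$ and the right-hand side $a(\mu r)\,u$ scale the same way, so the difference $u - u_0$ inherits the assumption $M(x, r(x), w) = a(\mu r(x))\, w(x)$ verbatim, and the maximum-principle argument applies to $w$ unchanged. I expect the growth estimate $a(\mu r) \geq 1$ to be the only genuine obstacle; everything else is a faithful transcription of the Theorem~4 argument with \eqref{MM} in place of the plain equality $M = u$.
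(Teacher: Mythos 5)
First, a point of comparison: the paper itself contains no proof of this statement --- it is quoted from \cite{Ku2} (``In the paper \cite{Ku2}, the following assertion was established''), so the only template available here is the proof of Theorem~4, which you explicitly set out to imitate. Your overall strategy --- a mean-value maximum principle followed by comparison with the panharmonic extension $u_0$ of the boundary trace, using that Dirichlet domains for the Laplace and Yukawa equations coincide --- is the right one and is very plausibly the argument of \cite{Ku2}.

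There is, however, a genuine error in your first step as stated. The equality $\max_{\overline D} u = \max_{\partial D} u$ is \emph{false} for functions satisfying \eqref{MM}: take $D = B_R(0)$ and $u(x) = -a(\mu |x|)$, minus the radial panharmonic function normalised to $1$ at the origin. This $u$ is panharmonic, hence satisfies \eqref{MM} on every admissible sphere by Theorem~5, yet its maximum over $\overline D$ equals $-1$ and is attained only at the centre, while on $\partial D$ one has $u = -a(\mu R) < -1$. Your own computation shows where the argument breaks: from $M(x_0, r(x_0), u) = a(\mu r(x_0))\, U$ and $M(x_0, r(x_0), u) \le U$ you obtain $\bigl(a(\mu r(x_0)) - 1\bigr) U \le 0$, which (since $a > 1$) yields only $U \le 0$; when $U < 0$ you cannot ``force $u$ to equal $U$ on the whole sphere,'' so the nearest-maximiser selection gives no contradiction. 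What is true --- and is all you need --- is the one-sided statement that a \emph{positive} interior maximum is impossible: if $U > 0$ were attained at some $x_0 \in D$, then $M(x_0, r(x_0), u) = a(\mu r(x_0))\, U > U \ge M(x_0, r(x_0), u)$, an immediate contradiction requiring no nearest-point argument at all. Applied to $w = u - u_0$ and to $-w$, both of which vanish on $\partial D$ and inherit the hypothesis by linearity (your homogeneity remark is correct, as is the inequality $a(\mu r) > 1$, visible from $a(t) = 1 + t^2/(2m) + \cdots$), this gives $w \le 0$ and $-w \le 0$, and the rest of your comparison argument goes through. So the plan is salvageable, but the maximum principle must be formulated and proved in its one-sided, positive-part form; as written, that step would fail.
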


This allows us to obtain the following analogue of Theorem 2 for panharmonic
functions.

\begin{theorem}
Let $D$ be a Dirichlet domain in $\RR^m$, $m \geq 2$, and let $u \in C^0 (\overline
D) \cap C^2 (D)$ be real-valued. If for all $r \in (0, r_*)$, where $r_*$ is a
positive number such that $D_{r_*} \neq \emptyset$, the mean $M (\cdot, r, u)$
satisfies equation \eqref{MHh} in $D_r$ and the coefficient is $\mu^2$ for all $r$,
then $u$ is panharmonic in~$D$ with the same coefficient in \eqref{MHh}.
\end{theorem}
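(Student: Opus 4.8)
The plan is to mirror the proof of Theorem 2, replacing each appeal to the harmonic mean value property (Theorem 1) by its panharmonic analogue (Theorem 5), and replacing the final appeal to Kellogg's converse (Theorem 3) by its panharmonic counterpart (Theorem 6). Since $M(\cdot, r, u)$ satisfies \eqref{MHh} with coefficient $\mu^2$ in $D_r$, Theorem 5 applies to the function $M(\cdot, r, u)$: its own spherical mean over an admissible sphere $S_{r'(x)}(x)$ picks up the same factor $a(\mu r')$. Thus for each $x \in D$ and suitable small $r, r'$ I would write $M(x, r'(x), M(\cdot, r, u)) = a(\mu\, r'(x))\, M(x, r, u)$, which in the notation \eqref{im} reads $I(x, r'(x), r, u) = a(\mu\, r'(x))\, M(x, r, u)$.

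First I would fix $x \in D$ and choose $r(x) \in (0, \mathrm{dist}(x, \partial D)/2)$ so that $\overline{B_{r(x)}(x)} \subset D_r$ for all admissible $r$; this is exactly the geometric setup used in the proof of Theorem~2 and guarantees that the iterated mean $I(x, r(x), r, u)$ is defined. Writing the displayed identity through \eqref{im'} and \eqref{sm}, the relation above becomes
\[ \frac{1}{\omega_m^2} \int_{S_1 (0)} \int_{S_1 (0)} u (x + r(x)\, y + r\, z) \, \D S_z \, \D S_y = a(\mu\, r(x)) \, \frac{1}{\omega_m} \int_{S_1 (0)} u (x + r\, y) \, \D S_y \, . \]
I would then let $r \to 0$. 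By continuity of $u$ on $\overline D$ the right-hand side tends to $a(\mu\, r(x))\, u(x)$, while on the left the inner integration over $z$ collapses so that the double integral tends to $M(x, r(x), u)$. This yields $M(x, r(x), u) = a(\mu\, r(x))\, u(x)$, precisely the panharmonic mean value equality \eqref{MM} with $r = r(x)$ at the point $x$.

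Having established that \eqref{MM} holds at every $x \in D$ for some admissible radius $r(x)$ and the fixed coefficient $\mu$, I would invoke Theorem~6 directly: its hypotheses are now verified on the Dirichlet domain $D$, and its conclusion is that $u$ is panharmonic in $D$ with coefficient $\mu^2$ in \eqref{MHh}. This completes the argument.

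The main obstacle, as in Theorem~2, is not any step of this reduction but the depth hidden in Theorem~6, whose proof rests on the solvability of the Dirichlet problem and a maximum-principle argument of the type illustrated in Theorem~4. The one genuinely new technical point to check carefully is the interchange of the limit $r \to 0$ with the double spherical integration on the left; since $u \in C^0(\overline D)$ is uniformly continuous on the compact sets involved and the inner integrand converges uniformly in $y$, dominated convergence justifies passing to the limit, so this point is routine but should be stated.
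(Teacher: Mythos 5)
Your proposal is correct and follows essentially the same route the paper intends: the paper's proof of this theorem is just the remark that it is ``literally the same as that of Theorem 2, but the reference to Theorem~6 must be made instead of Theorem~3.'' In fact you carry out the adaptation more carefully than the paper states it, since you correctly note that the appeal to Theorem~1 must also be replaced by Theorem~5, so that the iterated mean identity acquires the factor $a(\mu\, r(x))$ and the limit $r \to 0$ delivers exactly equality \eqref{MM} required by Theorem~6.
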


The proof is literally the same as that of Theorem 2, but the reference to Theorem~6
must be made instead of Theorem~3.

{\small

}

\end{document}